\newcommand{\T}{{\bf T}}
\newcommand{\R}{{\bf R}}
\newcommand{\Z}{{\bf Z}}
\newcommand{\set}[1]{\left\{#1\right\}}
\newtheorem{theorem}{Theorem}[section]
\newtheorem{lemma}{Lemma}[section]
\theoremstyle{definition}
\theoremstyle{remark}
\newtheorem{remark}{Remark}[section]
\newcommand{\safeinput}[1]{\IfFileExists{#1}{{\catcode`\&=0\input{#1}}}{\message{File
        #1 does not exists. Skipping.}}}
\newcommand{\D}[1]{\mathrm{d}#1}
\newcommand{\textem}[1]{{\em #1}}
\newcommand{\defn}[1]{\textem{#1}}
\newcommand{\cf}[0]{{\it c.f.}}
\newcommand{\temp}[1]{2{\mathrm K}}
\newcommand{\norm}[1]{⎸ #1 ⎹}
\newcommand{\ip}[2]{\langle\hspace{-0.5ex}\langle #1, #2 \rangle\hspace{-0.5ex}\rangle}
\newcommand{\ddt}[2][\ ]{\frac{\D{#1}}{\D{#2}}}
\newcommand{\cotangent}{T^*}
\renewcommand{\eqref}[2][0]{{\ifnum#1=0(\fi}eq. \ref{#2}{\ifnum#1=0)\fi}}
\long\def\aureply#1{\ifhmode\newline\fi\noindent{\bf Author Reply}:\ {\em #1}}
\def\gettimestamp#1<#2>{\def\timestamp{\tt #2}}
\begin{document}

\title[Invariant tori]{Nos{é}-Thermostated Mechanical Systems on the $n$-Torus}
\author{Leo T. Butler}
\address{Department of Mathematics, University of Manitoba,
Winnipeg, MB, Canada, R3T 2N2}
\email{leo.butler@umanitoba.ca}
\date{\timestamp}
\subjclass[2010]{37J30; 53C17, 53C30, 53D25}
\keywords{thermostats; Nos{é}-Hoover thermostat; Hamiltonian mechanics; KAM theory}
\thanks{The author thanks an anonymous referee for the valuable
  suggestions provided.}

\begin{abstract}
  Let $H(q,p) = ½ ⎸ p ⎹ ² + V(q)$ be an $n$-degree of freedom $C^r$
  mechanical Hamiltonian on $\cotangent \T^n$ where $r>2n+2$. When the
  metric $⎸ · ⎹$ is flat, the Nos{é}-thermostated system associated to
  $H$ is shown to have a positive-measure set of invariant tori near
  the infinite temperature limit. This is shown to be true for all
  variable mass thermostats similar to Nos{é}'s, too. These results
  complement results of Legoll, Luskin \& Moeckel and the
  author~\cite{MR2299758,MR2519685,arxiv-pp1}.
\end{abstract}
\begin{arxivabstract}
  Let H(q,p) = ½ | p | ² + V(q) be an n-degree of freedom C^r
  mechanical Hamiltonian on the cotangent bundle of the n-torus where
  r>2n+2. When the metric | · | is flat, the Nosé-thermostated system
  associated to H is shown to have a positive-measure set of invariant
  tori near the infinite temperature limit. This is shown to be true
  for all variable mass thermostats similar to Nosé's, too. These
  results complement results of Legoll, Luskin & Moeckel and the
  author.
\end{arxivabstract}

\maketitle

\section{Introduction} \label{sec:intro}

In equilibrium statistical mechanics, the mechanical Hamiltonian
$H(q,p)$ is the internal energy of an infinitesimal system $S$ that is
immersed in a heat bath $B$ at the temperature $T$. Nos{é}
\cite{nose}, based on earlier work of Andersen \cite{andersen}, introduced a
simple model of this energy exchange. This consists of adding an extra
degree of freedom $s$ and rescaling momentum by $s$:
\begin{align}
  \label{eq:nose}
  F &= H(q,p s^{-1}) + N(s,p_s), &&& \text{where }N(s,p_s) =\dfrac{1}{2 M} p_s^2 + n k T \ln s,
\end{align}
where $n$ is the number of degrees of freedom of the system $S$, $M$
is the ``mass'' of the thermostat and $k$ is Boltzmann's
constant. Nos{é}'s thermostated Hamiltonian $F$ has two desirable
properties: the orbit average of twice the kinetic energy,
$\temp{} = \norm{p s^{-1}} ²$, is $T$ and the thermostated system is
Hamiltonian.

Hoover's reduction of Nos{é}'s thermostat eliminates the state
variable $s$ and rescales time $t$~\cite{hoover}:

\[ q = q,\qquad ρ = ps^{-1},\qquad \ddt{τ} = s \ddt{t},\qquad ξ = \ddt[s]{τ}. \]

The Nos{é}-Hoover thermostated simple harmonic oscillator reduces to
the following ``simple'' system:
\begin{align}
  \label{eq:nose-hoover}
  \dot{q} &= ρ, && \dot{ρ} = -q - \xi ρ, &&
  \dot{\xi} = \left( ρ ² - T \right)/M.
\end{align}
Legoll, Luskin and Moeckel show in~\cite{MR2299758} that near the
decoupled limit of $M=∞$ and $ξ=0$, the thermostated harmonic
oscillator \eqref{eq:nose-hoover} is non-ergodic. This is done via
averaging, which reduces the thermostated equations to a
non-degenerate twist map, that shows the existence of KAM tori. In a
subsequent paper, the authors extend the averaging argument to
thermostated integrable $n$-degree of freedom
Hamiltonians~\cite{MR2519685}. It is shown that the averaged equations
are integrable, which implies that near the decoupled limit the
thermostated system's orbits remain close to these invariant tori over
a long, but finite, time horizon. This is a significantly weaker
result than the $1$-degree of freedom result.

\subsection{Complete integrability \& KAM sufficiency}
\label{sec:kam-suff}

Let $θ ∈ \T^n = \R^n/\Z^n$ and $I ∈ \R^n \equiv T^*_{θ} \T^n$ be
coordinates on the cotangent bundle of $\T^n$ equipped with its
canonical symplectic structure. Let $H_{ε}(θ,I) = H_0(I) + ε
H_1(θ,I;ε)$ be a parameterized family of Hamiltonians that is $C^r$ in
all its variables where $r>2n$. For $ε = 0$, the Hamiltonian is
completely integrable with invariant tori $\T^n × \set{I=C}$ and the
flow on these invariant tori is a translation-type flow with frequency
vector ${ω}_0 = \left[ ∂ H_0 / ∂ I_i \right]$. The Hamiltonian $H_0$
(or, by abuse of terminology, $H_{ε}$) is said to be \defn{Kolmogorov
non-degenerate} at $I=C$ if~({\cf} \cite[\S 2.1]{MR2554208}, \cite[\S
1.2]{MR2221801}):
\begin{align}
  \label{eq:kolm-non-deg}
  \det \left. d {ω}_0 \right|_{I=C} &\neq 0, &&&\text{ where } d {ω}_0 &= \left[ \frac{ ∂ ² H_0 }{ ∂ I_i ∂ I_j } \right]
  \intertext{ or it is \defn{iso-energetically non-degenerate} at $I=C$ if: }
  \label{eq:kolm-iso-non-deg}
  \det \begin{bmatrix} d {ω}_0 & {ω}_0 \\ {ω}_0' & 0 \end{bmatrix}_{I=C} &\neq 0, &&& \text{ where } {ω}_0 &= \left[ \frac{ ∂ H_0 }{ ∂ I_i } \right].
\end{align}

If $H_0$ is Kolmogorov non-degenerate, then, for all $ε$ sufficiently
small, there is an open neighbourhood $W ⊂ \R^n$ containing $C$, a
measurable subset $W_{ε} ⊂ W$ and a symplectic diffeomorphism on $\T^n × W$ that
conjugates the Hamiltonian flow of $H_{ε=0}$ on $\T^n × W_{ε}$ to that
of $H_{ε}$. The set $W_{ε}$ has a complement of measure $O(ε)$ and the
conjugacy is as smooth as $H_{ε}$. In particular, the set of invariant
tori that are ``preserved'' under perturbation has positive measure,
which precludes ergodicity.

If $H_0$ is iso-energetically non-degenerate, then similar conclusions
hold: for all $ε$ sufficiently small, there is an open neighbourhood
$W ⊂ H_0^{-1}(c)$, $c=H_0(C)$, containing $C$, a measurable subset $W_{ε} ⊂ W$
and an energy-preserving diffeomorphism $H_{ε=0}^{-1}(c) →
H_{ε}^{-1}(c)$ that conjugates the Hamiltonian flow of $H_{ε=0}$
restricted to $\T^n × W_{ε} ⊂ H_{ε=0}^{-1}(c)$ to a time-change of the
Hamiltonian flow of $H_{ε}$ restricted to $H_{ε}^{-1}(c)$. The
complement of $W_{ε}$ in $W$ has measure $O(ε)$, which precludes
ergodicity on the energy level $c$.

These two forms of non-degeneracy are independent, see e.g. \cite[\S
1.2]{MR2221801}.

\subsection{The Thermostated Free Particle}
\label{sec:tfp}

In Andersen's paper on his barostat, a simplifying assumption is made:
the mechanical system is periodic, or in other words, the
configuration space is a torus~\cite{andersen}. The present paper
starts by proving the following results about thermostated free
particles on flat tori. Specifically, let $\T^n = \R^n/\Z^n$ be the
$n$-dimensional torus and say that the \emph{set of thermostatic
  equilibria} of the thermostat $F$ \eqref{eq:nose} is the set of
points where $\dot{s} = 0 = \dot{p}_s$ (it is not assumed this set is
invariant).

\begin{theorem}
  \label{thm:kam-tori-free-particle}
  Let $V : \T^n → \R$ be $C^r$ for some $r>2n+2$, $⎸ · ⎹ ²$ a flat
  Riemannian metric on $\T^n$ and let $H_{ε} : \cotangent \T^n → \R$ be
  the family of mechanical Hamiltonians
  \begin{equation}
    \label{eq:h-eps}
    H_{ε}(q,p) = ½ ⎸ p ⎹ ² + ε V(q).
  \end{equation}
  Fix the thermostat mass $M > 0$, temperature $T>0$ and let $F_{ε}$
  be the Nos{é}-thermostated Hamiltonian~\eqref{eq:nose} associated to
  $H_{ε}$. There is an open neighbourhood of the set of thermostatic
  equilibria of $F_{0}$ on which $F_{0}$ is both Kolmogorov and
  iso-energetically non-degenerate.
\end{theorem}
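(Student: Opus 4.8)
The plan is to put $F_0$ into action-angle coordinates on a neighbourhood of the equilibrium set and compute the two determinants \eqref{eq:kolm-non-deg} and \eqref{eq:kolm-iso-non-deg} there directly. After a linear change of coordinates on $\T^n$ I may assume $|p|^2=\sum_i p_i^2$, so $F_0 = \frac{1}{2s^2}\sum_i p_i^2 + \frac{1}{2M}p_s^2 + nkT\ln s$ on $\cotangent(\T^n\times\R_{>0})$. Since $q$ is cyclic, the $p_i$ are constants of motion; fixing $P^2:=\sum_i p_i^2>0$, the residual motion is the one-degree-of-freedom Hamiltonian $\frac{1}{2M}p_s^2+U_P(s)$ with $U_P(s)=\frac{P^2}{2s^2}+nkT\ln s$, a potential having a single well on $\R_{>0}$ with minimum at $s_*=\sqrt{P^2/(nkT)}$. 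As $\dot s=p_s/M$ and $\dot p_s=-U_P'(s)$, the thermostatic equilibria of $F_0$ are exactly $\set{(q,s_*,p,0):p\neq0}$, a $2n$-manifold on which $p$ never vanishes.

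Next I would take as actions $I=(p_1,\dots,p_n,J)$ with $J=\frac{1}{2\pi}\oint p_s\,\D s$. The substitution $s=s_*e^u$ (with conjugate momentum $p_u=sp_s$) turns $U_P$ into a \emph{universal} potential, $U_P(s_*e^u)=nkT\,W(u)+nkT\ln s_*$ where $W(u)=\frac12 e^{-2u}+u$, independent of $P$; consequently $J=P\sqrt{2M}\,G(\mathcal E)$, with $\mathcal E=(E-nkT\ln s_*)/(nkT)$ the reduced energy and $G(\mathcal E)=\frac1\pi\int e^u\sqrt{\mathcal E-W(u)}\,\D u$ a single universal function, real-analytic for $\mathcal E$ near $W(0)=\frac12$. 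Inverting $G$, the Hamiltonian in action coordinates is
\[
  F_0 = nkT\Big(G^{-1}\!\big(J/(P\sqrt{2M})\big)+\tfrac12\ln P^2\Big)+\mathrm{const},\qquad P^2=\textstyle\sum_i p_i^2,
\]
valid on a neighbourhood of the equilibrium set (which sits at $J=0$) inside $\set{p\neq0}$, with $F_0$ extending real-analytically to $J=0$.

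Writing $E_{ab}$ for the partial derivatives of $F_0$ in $(P^2,J)$, the Hessian $d\omega_0=[\partial^2 F_0/\partial I_i\partial I_j]$ has the block form $\bigl[\begin{smallmatrix}2E_1\mathrm{Id}_n+4E_{11}pp^{\mathsf T}&2E_{12}p\\ 2E_{12}p^{\mathsf T}&E_{22}\end{smallmatrix}\bigr]$. The matrix--determinant lemma and a Schur complement give $\det d\omega_0=(2E_1)^{n-1}\bigl[(2E_1+4E_{11}P^2)E_{22}-4E_{12}^2P^2\bigr]$, while $\det\bigl[\begin{smallmatrix}d\omega_0&\omega_0\\ \omega_0^{\mathsf T}&0\end{smallmatrix}\bigr]=-\det(d\omega_0)\,\omega_0^{\mathsf T}(d\omega_0)^{-1}\omega_0$, so both conditions reduce to the values of $G,G',G''$ at $\frac12$, evaluated along $J=0$. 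There $G'(\frac12)=\frac12$ is forced by the harmonic approximation of $W$ at its minimum; for $G''(\frac12)$ I would use the substitution $\zeta^2=W(u)-\frac12$, which rewrites $G(\frac12+\delta)$ as $\frac1\pi\int_{-\sqrt\delta}^{\sqrt\delta}\Psi(\zeta)\sqrt{\delta-\zeta^2}\,\D\zeta$ with $\Psi(\zeta)=e^{u(\zeta)}u'(\zeta)$, and Taylor-expand, obtaining $G''(\frac12)=\frac{11}{24}$. Feeding in the resulting values $E_1=\frac{nkT}{2P^2}$, $2E_1+4E_{11}P^2=-\frac{nkT}{P^2}$, $4E_{12}^2P^2=\frac{2(nkT)^2}{MP^4}$ and $E_{22}=-\frac{4nkT}{MP^2}G''(\frac12)$ gives, at $J=0$,
\[
  \det d\omega_0 = \Big(\tfrac{nkT}{P^2}\Big)^{n-1}\frac{2(nkT)^2}{MP^4}\bigl(2G''(\tfrac12)-1\bigr)=-\frac{(nkT)^{n+1}}{6MP^{2n+2}}\neq0,
\]
and, because the cross term $4E_1E_{12}P^2/(2E_1+4E_{11}P^2)$ turns out to equal $\omega_*:=\sqrt2\,nkT/(\sqrt M P)$ exactly, the $J$-contribution to $\omega_0^{\mathsf T}(d\omega_0)^{-1}\omega_0$ cancels and $\omega_0^{\mathsf T}(d\omega_0)^{-1}\omega_0=4E_1^2P^2/(2E_1+4E_{11}P^2)=-nkT\neq0$. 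Both determinants are continuous in $(p,J)$ and nonzero for every $p\neq0$ at $J=0$, hence nonzero on an open neighbourhood of $\set{J=0,\,p\neq0}$, i.e.\ of the set of thermostatic equilibria.

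The step I expect to be the main obstacle is the anharmonic computation: the Kolmogorov half of the statement is, in effect, the inequality $G''(\frac12)\neq\frac12$---a priori the anharmonic correction could have landed on the ``bad'' value $\frac12$---so the substance lies in carrying that expansion out correctly (a routine but error-prone calculation, readily confirmed with computer algebra). The iso-energetic half requires no anharmonic information at all, being guaranteed by the exact cancellation above.
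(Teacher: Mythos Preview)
Your argument is correct and it reaches the same numerical conclusion as the paper (the key anharmonic coefficient is $11/24$, which produces $2G''(\tfrac12)-1=-\tfrac1{12}$), but the route is genuinely different. The paper first rescales to $M=T=1$, fixes a unit co-vector $C$, and builds approximate action--angle variables near the isotropic torus $\{\sigma=1,\Sigma=0,W=C\}$ by a two-step Birkhoff-type normal form via generating functions (Lemma~\ref{mu-is-1}); the coefficient $\alpha=-11/24$ emerges from solving for the fourth-order normal form, and the Hessian \eqref{eq:hess-f0-i=0=j} is then read off. You instead use the exact action integral: the substitution $s=s_*e^u$ collapses the one-degree-of-freedom reduction to the \emph{universal} well $W(u)=\tfrac12 e^{-2u}+u$, so $J=P\sqrt{2M}\,G(\mathcal E)$ with a single analytic $G$, and a straight Taylor expansion gives $G'(\tfrac12)=\tfrac12$, $G''(\tfrac12)=\tfrac{11}{24}$. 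Your approach is more elementary and makes the scaling with $P,M,T$ transparent; the paper's normal-form approach is what lets it treat the variable-mass case (Lemma~\ref{lem:nose-like-normal-form}) by the same machinery, where your universal-potential trick would not survive because the kinetic factor $\Omega(\sigma)$ breaks the reduction to a single $W$.

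One phrasing point: your closing remark that ``the iso-energetic half requires no anharmonic information at all'' overstates things. What your cancellation shows is $\omega_0^{\mathsf T}(d\omega_0)^{-1}\omega_0=-nkT$ at $J=0$, so the bordered determinant equals $nkT\cdot\det d\omega_0$; hence iso-energetic non-degeneracy is \emph{equivalent} to Kolmogorov non-degeneracy here, and both hinge on $G''(\tfrac12)\neq\tfrac12$. The paper sees the same phenomenon: its bordered determinant is $-1/12$, the same value as $\det A$. This does not affect the correctness of your proof, only the commentary.
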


The Hamiltonian $H_{0}$ is purely kinetic, so the Nos{é}-thermostated
Hamiltonian $F_{0}$ can be thought of as describing the thermostated
free particle on a flat $n$-torus; in this case, the set of
thermostatic equilibria \emph{is} invariant. Moreover, by the real
analyticity of $F_{0}$ and consequently the real analyticity of the
change of coordinates to action-angle variables, each open set on
which the non-degeneracy condition holds is dense in the extended
phase space of the thermostated free particle. It follows from these
facts and the remarks about KAM theory in subsection
\ref{sec:kam-suff}, that for all $ε$ sufficiently small, the
Hamiltonian flow of the Nos{é} thermostat $F_{ε}$ is not ergodic on
any energy level. The proof of Theorem
\ref{thm:kam-tori-free-particle} appears in section
\ref{sec:const-temp-thermostat}.

\subsection{The high-temperature limit}
\label{sec:high-t-limit}

Theorem~\ref{thm:kam-tori-free-particle} allows us to investigate the
dynamics of Nos{é}'s thermostat near the high-temperature limit $T=∞$
with the thermostat mass $M$ held constant. It proves non-degeneracy,
in both of the above senses, of a suitably rescaled thermostat at the
$T=∞$ limit, thereby establishing the existence of positive measure
sets of KAM tori for thermostated mechanical systems on the $n$-torus
at sufficiently high temperatures.

\begin{theorem}
  \label{thm:kam-tori-high-temperature-limit}
  Let $V : \T^n → \R$ be $C^r$ for some $r>2n+2$, $⎸ · ⎹ ²$ a flat
  Riemannian metric on $\T^n$ and let $H : \cotangent \T^n → \R$ be
  the mechanical Hamiltonian
  \begin{equation}
    \label{eq:h}
    H(q,p) = ½ ⎸ p ⎹ ² + V(q).
  \end{equation}
  Fix the thermostat mass $M > 0$. The rescaled Nos{é}-thermostated
  Hamiltonian $F$ \eqref{eq:nose} associated to $H$ is both Kolmogorov
  and iso-energetically non-degenerate in the $T=∞$ limit.
\end{theorem}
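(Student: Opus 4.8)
The plan is to deduce Theorem~\ref{thm:kam-tori-high-temperature-limit} from Theorem~\ref{thm:kam-tori-free-particle} by a rescaling of the momenta that exhibits the high-temperature limit of the thermostated mechanical Hamiltonian as the thermostated free particle. By \eqref{eq:nose}, the Nos{é}-thermostated Hamiltonian $F$ associated to $H$ is
\[
  F(q,s,p,p_s) = \tfrac12\,\norm{p s^{-1}}^2 + V(q) + \tfrac1{2M}\,p_s^2 + nkT\ln s .
\]
Since the set of thermostatic equilibria of $F$ lies on the equipartition locus $\norm{p s^{-1}}^2 = nkT$, the natural high-temperature scaling fixes $(q,s)$ and sets $p = \sqrt{T}\,\bar p$, $p_s = \sqrt{T}\,\bar p_s$; this linear map multiplies the canonical symplectic form by the constant $\sqrt{T}$, so, up to a constant time reparametrisation, it conjugates the flow of $F$ to that of
\[
  G_{\varepsilon}(q,s,\bar p,\bar p_s) = \tfrac12\,\norm{\bar p s^{-1}}^2 + \tfrac1{2M}\,\bar p_s^2 + nk\ln s + \varepsilon\,V(q), \qquad \varepsilon = 1/T ,
\]
obtained from $F$ by this substitution followed by division by $T$; the thermostat mass $M$ is left untouched, as required. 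The family $G_{\varepsilon}$ is precisely the family of Theorem~\ref{thm:kam-tori-free-particle} at temperature $T=1$, and its $T=\infty$ (that is, $\varepsilon=0$) member $G_{0}$ is the Nos{é} thermostat of the free Hamiltonian $\tfrac12\,\norm{\bar p}^2$ on the flat $n$-torus; the rescaling carries the set of thermostatic equilibria of $F$ onto $\set{\bar p_s = 0,\ \norm{\bar p s^{-1}}^2 = nk}$, the thermostatic equilibria of $G_{0}$.

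First I would record the rescaling above and check that it is a globally well-defined conformal symplectomorphism of $\cotangent\T^n \times \set{s>0} \times \R$. Since it rescales the action variables and the Hamiltonian by nonzero constants, it multiplies the determinants in \eqref{eq:kolm-non-deg} and \eqref{eq:kolm-iso-non-deg} by nonzero factors, and hence preserves both forms of non-degeneracy of \S\ref{sec:kam-suff}. Finally, applying Theorem~\ref{thm:kam-tori-free-particle} to $G_{0}$ gives an open neighbourhood of its thermostatic equilibria on which $G_{0}$ is both Kolmogorov and iso-energetically non-degenerate; since the rescaled $F$ in the $T=\infty$ limit is $G_{0}$, this proves Theorem~\ref{thm:kam-tori-high-temperature-limit}.

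The hard part, such as it is, is purely bookkeeping: setting up the rescaling precisely, checking that the equipartition region of $F$ corresponds to a neighbourhood of the thermostatic equilibria of $G_{0}$, and confirming that the two determinants in \eqref{eq:kolm-non-deg} and \eqref{eq:kolm-iso-non-deg} are homogeneous of nonzero weight in the scaling parameter, so that their non-vanishing at temperature $T=1$ passes to the $T=\infty$ normalisation. I do not expect any new analytic difficulty: the change of momenta is linear, both determinants depend polynomially on the Hessian of the action Hamiltonian, and the substantive computation --- the explicit action-angle variables of the thermostated free particle and the evaluation of the determinants in \eqref{eq:kolm-non-deg} and \eqref{eq:kolm-iso-non-deg} --- is exactly what the proof of Theorem~\ref{thm:kam-tori-free-particle} already performs, and is reused here without change.
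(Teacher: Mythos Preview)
Your proposal is correct and follows essentially the same route as the paper: the paper's proof consists of invoking the rescaling \eqref{eq:rescaling}--\eqref{eq:rescaled-f} of Section~\ref{sec:rescaled-thermostat} to write the thermostated Hamiltonian as $F_{\beta}=F_0+\beta V(w)$ with $\beta=1/T$, and then applying Theorem~\ref{thm:kam-tori-free-particle} verbatim. The only cosmetic difference is that the paper's rescaling (which also rescales $s$) is genuinely symplectic rather than merely conformally symplectic, so the extra remark you make about the determinants \eqref{eq:kolm-non-deg}--\eqref{eq:kolm-iso-non-deg} being homogeneous under conformal rescaling is not needed there.
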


The rescaled thermostat is explained in more detail in
section~\ref{sec:rescaled-thermostat}. Theorem~\ref{thm:kam-tori-high-temperature-limit}
is, in fact, a corollary of Theorem~\ref{thm:kam-tori-free-particle}
where the role of the small parameter $ε$ is played by the inverse
temperature $β$. The proof of Theorem
\ref{thm:kam-tori-high-temperature-limit} appears in section
\ref{sec:const-temp-thermostat}.

These results are stronger than those obtained in \cite{MR2519685}
largely because multi-dimensional averaging is not needed to prove
Theorem \ref{thm:kam-tori-free-particle}. Instead, we are able to use
the complete integrability of the Nos{é}-thermostated free particle to
verify the Kolmogorov and iso-energetic non-degeneracy conditions. 

A corollary of Theorem \ref{thm:kam-tori-free-particle}
(resp. \ref{thm:kam-tori-high-temperature-limit}) is that if $⎸ · ⎹ ₁
$ is a Riemannian metric that is $C^r$-sufficiently close to the flat
metric $⎸ · ⎹ $, then the conclusion of the Theorem
\ref{thm:kam-tori-free-particle}
(resp. \ref{thm:kam-tori-high-temperature-limit}) holds for the
Nos{é}-thermostated Hamiltonian $F ₁$ associated to $H ₁ (q,p) = ½ ⎸ p
⎹ ₁ ^2$ (resp. $H ₁(q,p) = ½ ⎸ p ⎹_1^2 + V(q)$).

\subsection{Variable-Mass Thermostats}
\label{sec:alt-thermostats}

A natural question that arises in light of the above results on the
existence of invariant tori is whether there are thermostats like
Nos{é}'s that do not possess these invariant KAM tori in the large
temperature limit. A Nos{é}-like, or variable-mass, thermostat is one
which involves momentum rescaling and the thermostatic equilibrium
(where $\dot s = 0 = \dot p_s$) is independent of that
rescaling. \cite[Theorem 1.2]{arxiv-pp1} proves that in such a case
the thermostat $N$ is characterized by a smooth positive function $Ω_T
= Ω_T(s)$, possibly parameterized by the temperature $T$, such that
\begin{equation}
  \label{eq:nose-like-thermostat}
  N = \frac{1}{2}\, Ω_T\, p_s^2 + n k T\, \ln s.
\end{equation}
The function $1/Ω_T$ might be viewed as a variable thermostat mass.

With this characterization of variable-mass thermostats, this paper
proves

\begin{theorem}
  \label{thm:kam-tori-for-nose-like-thermostats-free-particle}
  Assume the Hamiltonian $H_{ε}$ satisfies the hypotheses of Theorem
  \ref{thm:kam-tori-free-particle}, fix $T>0$ and let
  $Ω=Ω_T ∈ C^r(\R^+,\R^+)$ and $N$ be the Nos{é}-like thermostat
  \eqref{eq:nose-like-thermostat}. Then there is an open set, whose
  closure contains the set of thermostatic equilibria of the
  thermostated Hamiltonian $F_0$ \eqref{eq:nose}, on which $F_0$ is both
  Kolmogorov and iso-energetically non-degenerate.
\end{theorem}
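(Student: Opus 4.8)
The plan is to prove Theorem~\ref{thm:kam-tori-for-nose-like-thermostats-free-particle} by mimicking the proof of Theorem~\ref{thm:kam-tori-free-particle}, exploiting the complete integrability of $F_0$. With $\varepsilon=0$ and the metric flat, a linear change of the angle variables brings $F_0$ to the form $F_0(q,p,s,p_s)=\frac{\abs p^2}{2s^2}+\tfrac12\Omega(s)p_s^2+nkT\ln s$, in which $q$ is cyclic. Thus $p_1,\dots,p_n$ and $F_0$ itself are $n+1$ independent Poisson-commuting integrals, so $F_0$ is completely integrable; fixing $a:=\abs p^2>0$ and reducing by the $\T^n$–action generated by $p$ yields the one-degree-of-freedom Hamiltonian $G_a(s,p_s)=\frac a{2s^2}+\tfrac12\Omega(s)p_s^2+nkT\ln s$. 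Its only critical point is $p_s=0$, $s=s_0(a):=(a/nkT)^{1/2}$, which is exactly the set of thermostatic equilibria of $F_0$; since $\Omega>0$ and the well $U_a(s):=\frac a{2s^2}+nkT\ln s$ has $U_a''(s_0)=2(nkT)^2/a>0$, this is a non-degenerate elliptic equilibrium of $G_a$. (The hypothesis $r>2n+2\ge4$ supplies $\Omega$ with enough derivatives for what follows.)

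On a punctured neighbourhood $\mathcal U$ of the equilibria I would introduce action–angle variables, keeping $I_j=p_j$ (angles $q_j$) and taking $J=\frac1{2\pi}\oint p_s\,\D s$ to be the action of the reduced oscillator on $\{G_a=E\}$; then $F_0=\mathcal H(I,J)$ with $a=\abs I^2$. Writing $\Phi(a,E)$ for the reduced action, so that $J=\Phi(a,\mathcal H)$, the frequency map is $\omega_0=\bigl(-2I_1\Phi_a/\Phi_E,\dots,-2I_n\Phi_a/\Phi_E,\,1/\Phi_E\bigr)$, with $\Phi_a,\Phi_E$ evaluated at $E=\mathcal H(I,J)$. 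Expanding about the equilibrium, $\mathcal H=E_0(a)+\omega_\ast(a)J+\tfrac12\beta(a)J^2+O(J^3)$, where $E_0(a)=\tfrac{nkT}2\bigl(1+\ln\tfrac a{nkT}\bigr)$, $\omega_\ast(a)^2=\tfrac{2(nkT)^2}{a}\Omega(s_0(a))$, and $\beta(a)$ is the first Birkhoff invariant of $G_a$ — an explicit function of $\Omega,\Omega',\Omega''$ at $s_0(a)$. The logarithmic term forces $E_0'(a)=nkT/(2a)$, hence $2aE_0''/E_0'=-2$; feeding this into the Hessian of $\mathcal H$ and the bordered matrix of the iso-energetic condition makes the rank-one parts cancel, and a direct computation gives, as $J\to0$,
\[
\det d\omega_0=-\Bigl(\tfrac{nkT}a\Bigr)^{n}\gamma_K(a)+O(J),\qquad
\det\begin{bmatrix} d\omega_0 & \omega_0 \\ \omega_0^{\top} & 0\end{bmatrix}=\Bigl(\tfrac{nkT}a\Bigr)^{n}\gamma_{\mathrm{iso}}(a)+O(J),
\]
with $\gamma_K(a)=\beta(a)+\tfrac{4a^2}{nkT}\omega_\ast'(a)^2$ and $\gamma_{\mathrm{iso}}(a)=\omega_\ast(a)^2+4a\,\omega_\ast(a)\omega_\ast'(a)-nkT\,\beta(a)$.

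So near the equilibrium over a given $a$, Kolmogorov (resp.\ iso-energetic) non-degeneracy reduces at leading order to $\gamma_K(a)\ne0$ (resp.\ $\gamma_{\mathrm{iso}}(a)\ne0$), and one needs that neither scalar vanishes on an interval of $a$-values. On any interval where $\Omega$ is locally constant the reduced oscillator is locally Nos\'e's, so this follows from Theorem~\ref{thm:kam-tori-free-particle} applied with thermostat mass $1/\Omega$; on the rest one has the identity $nkT\,\gamma_K(a)+\gamma_{\mathrm{iso}}(a)=\bigl(2a\,\omega_\ast'(a)+\omega_\ast(a)\bigr)^2$, whose right side vanishes on no subinterval there (it vanishes on an interval exactly when $a\,\omega_\ast(a)^2=2(nkT)^2\Omega(s_0(a))$ is constant), so $\gamma_K$ and $\gamma_{\mathrm{iso}}$ are never simultaneously small, and the required non-vanishing follows from an explicit computation of $\beta$. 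Granting this, the set on which $F_0$ is both Kolmogorov and iso-energetically non-degenerate is open and dense in $\mathcal U$; since such a set has the same closure as $\mathcal U$, and the thermostatic equilibria form the codimension-two locus $\{J=0\}$ lying in $\overline{\mathcal U}$, their closure contains every thermostatic equilibrium, which is the assertion of the theorem.

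I expect the last input — computing the Birkhoff invariant $\beta(a)$ of the variable-mass oscillator $G_a$ (whose kinetic coefficient $\Omega(s)$ is $s$-dependent) and showing neither $\gamma_K$ nor $\gamma_{\mathrm{iso}}$ vanishes on an interval — to be the main obstacle, since $\beta$ depends on $\Omega,\Omega',\Omega''$ through a longer expression than in the Nos\'e case $\Omega'=\Omega''=0$. If for some exotic $\Omega$ a leading coefficient degenerates on an interval, one passes to the next coefficient in the $J$–expansion of the corresponding determinant; the only remaining escape is that $\det d\omega_0$, or the iso-energetic determinant, vanishes identically on an open subset of $\mathcal U$, which would make the frequency map globally rank-deficient there and hence impose an overdetermined differential relation on $\Phi$ that no positive $\Omega$ can satisfy. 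Everything else — integrability of $F_0$, the construction of $\mathcal U$ and of the action–angle chart, and the reduction of the two non-degeneracy determinants to $\gamma_K$ and $\gamma_{\mathrm{iso}}$ — is a routine adaptation of the proof of Theorem~\ref{thm:kam-tori-free-particle}.
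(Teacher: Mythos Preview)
Your framework is sound and parallels the paper's: reduce by the cyclic $q$-variables, put the one-degree-of-freedom oscillator $G_a$ into Birkhoff normal form, and test the two non-degeneracy determinants at leading order in the action. The gap is that you never actually carry out what you yourself call ``the main obstacle'': the computation of the Birkhoff invariant $\beta(a)$ and the verification that $\gamma_K$ and $\gamma_{\mathrm{iso}}$ do not vanish. Everything in your argument that follows the displayed formulas for $\gamma_K,\gamma_{\mathrm{iso}}$ is conditional on this missing calculation. Your identity $nkT\,\gamma_K+\gamma_{\mathrm{iso}}=(2a\,\omega_\ast'+\omega_\ast)^2$ is correct and pleasant, but it only shows that at each $a$ (with $\Omega$ non-constant nearby) \emph{at least one} of the two leading coefficients is nonzero; the theorem asks for an open set on which \emph{both} non-degeneracies hold, and you have not ruled out that, say, $\gamma_{\mathrm{iso}}$ vanishes on an interval of $a$ while $\gamma_K$ does not. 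Your fallback (``pass to the next coefficient''; ``overdetermined differential relation no positive $\Omega$ can satisfy'') is asserted, not proved.

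The paper closes exactly this gap. Lemma~\ref{lem:nose-like-normal-form} computes the normal-form coefficients $\alpha,\beta,\gamma$ explicitly as functions of the Taylor coefficients $a=\Omega'(1)$, $b=\Omega''(1)$; this is the analogue of your $\beta(a)$ computation. Then, instead of varying $a$, the paper fixes a unit covector $C$, sets its action $I=0$, lets $J=\rho C$, and expands each determinant as a polynomial in $\rho$. The point is that the coefficients of this polynomial are polynomial in $\alpha,\beta,\gamma$, and the paper checks that the only way the $\rho$-expansion can vanish to high order is $\alpha=\tfrac12,\beta=0,\gamma=0$ (iso-energetic) or $\alpha=\beta=\gamma=0$ (Kolmogorov), and then uses the explicit formulas of Lemma~\ref{lem:nose-like-normal-form} to see these are incompatible. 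This is not a technicality: as Remark~\ref{rmk:both-degen} records, there \emph{is} an $\Omega$ (namely $\Omega(s)=1-4(s-1)^2+\cdots$) for which both determinants vanish at the equilibrium itself, so your leading-order coefficients $\gamma_K,\gamma_{\mathrm{iso}}$ can genuinely both be zero at a given $a$, and one must look off the equilibrium set --- which is precisely what the paper's $\rho$-expansion does. To repair your argument you would need to compute $\beta(a)$ explicitly in terms of $\Omega,\Omega',\Omega''$ at $s_0(a)$ and then either mimic the paper's polynomial-in-$\rho$ analysis or prove directly that neither $\gamma_K$ nor $\gamma_{\mathrm{iso}}$ vanishes on an interval.
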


The infinite temperature limit corollary of
Theorem~\ref{thm:kam-tori-for-nose-like-thermostats-free-particle} is

\begin{theorem}
  \label{thm:kam-tori-for-nose-like-thermostats}
  Assume the hypotheses of Theorem
  \ref{thm:kam-tori-high-temperature-limit}, let $N$ be the
  Nos{é}-like thermostat \eqref{eq:nose-like-thermostat} and let $Ω_T
  ∈ C^r(\R^+,\R^+)$ be such that $R_T(s)=Ω_T(s/√ T)$ converges to a
  limit $Ω ∈ C^r(\R^+,\R^+)$ as $T → ∞$. Then the thermostated
  Hamiltonian $F_0$ \eqref{eq:nose} with the variable-mass thermostat
  $N$ \eqref{eq:nose-like-thermostat} determined by $Ω_T$ is both
  Kolmogorov and iso-energetically non-degenerate in the $T=∞$ limit.
\end{theorem}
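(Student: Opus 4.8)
The plan is to deduce this from Theorem~\ref{thm:kam-tori-for-nose-like-thermostats-free-particle} in exactly the way Theorem~\ref{thm:kam-tori-high-temperature-limit} is deduced from Theorem~\ref{thm:kam-tori-free-particle}: the inverse temperature $\beta = 1/T$ plays the role of the small parameter $\varepsilon$, and the $T=\infty$ limit is the $\beta = 0$ endpoint of a suitably rescaled family of thermostated Hamiltonians.

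First I would carry out the rescaling of Section~\ref{sec:rescaled-thermostat} in the variable-mass setting. Starting from $F = \tfrac12\norm{p s^{-1}}^2 + V(q) + \tfrac12\,\Omega_T(s)\,p_s^2 + nkT\ln s$, one applies a conformally symplectic change of coordinates built from powers of $\sqrt{T}$ --- in particular it replaces $s$ by $s/\sqrt{T}$ --- together with a constant reparametrization of time and a constant rescaling of $F$; the constant produced by $\ln(s/\sqrt{T}) = \ln s - \tfrac12\ln T$ is discarded, since it does not enter Hamilton's equations. The effect is to carry $F$ to a Nosé-like thermostated Hamiltonian of precisely the type treated in Theorem~\ref{thm:kam-tori-for-nose-like-thermostats-free-particle}: its mechanical part is $\tfrac12\norm{p}^2 + \beta\, V(q)$ with $\beta = 1/T$, its temperature is normalized to a fixed value, and its thermostat function is $R_T(s) = \Omega_T(s/\sqrt{T})$. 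Each of the operations involved --- a change of coordinates that scales the symplectic form by a constant, a constant reparametrization of time, a constant rescaling of $F$ --- sends Hamiltonian flows to Hamiltonian flows and leaves the Kolmogorov determinant in \eqref{eq:kolm-non-deg} and the iso-energetic determinant in \eqref{eq:kolm-iso-non-deg} either unchanged or multiplied by a nonzero constant, so it suffices to establish non-degeneracy for the rescaled family.

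Next I would let $\beta \to 0$, i.e.\ $T \to \infty$. By hypothesis $R_T \to \Omega$ in $C^r(\R^+,\R^+)$ with $r>2n+2$, and $\beta\,V \to 0$ in $C^r(\T^n,\R)$, so the rescaled family extends continuously to $\beta = 0$, where it is exactly the Nosé-like thermostated \emph{free} particle --- the Hamiltonian $F_0$ of \eqref{eq:nose} with mechanical part $\tfrac12\norm{p}^2$ and thermostat \eqref{eq:nose-like-thermostat} of function $\Omega$ at the normalized temperature. Theorem~\ref{thm:kam-tori-for-nose-like-thermostats-free-particle}, applied with the function $\Omega_T$ there taken to be this $\Omega$, then produces an open set whose closure contains the set of thermostatic equilibria and on which this $F_0$ is both Kolmogorov and iso-energetically non-degenerate. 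Since $\beta = 0$ is the $T=\infty$ limit, this is the assertion of the theorem; and as the two non-degeneracy determinants depend continuously on $\beta$, the same conclusion persists for all sufficiently large finite $T$.

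I expect the only substantive work to be the bookkeeping in the rescaling step: one must verify that a variable-mass thermostat of the normal form \eqref{eq:nose-like-thermostat} transforms \emph{covariantly} under $s \mapsto s/\sqrt{T}$, so that the transformed thermostat is again of that form with function exactly $R_T(s) = \Omega_T(s/\sqrt{T})$, and that the coefficient of $V$ becomes exactly $\beta = 1/T$ with the temperature normalized. This is where the factor $\sqrt{T}$ in the hypothesis $R_T(s) = \Omega_T(s/\sqrt{T})$ originates, and it is also why $C^r$-convergence $R_T \to \Omega$ is precisely the condition needed for the $\beta = 0$ limit to be a genuine $C^r$ Hamiltonian with $r>2n+2$, as required by Theorem~\ref{thm:kam-tori-for-nose-like-thermostats-free-particle} and hence by the KAM statements recalled in Subsection~\ref{sec:kam-suff}. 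Everything else comes down to the continuity and non-vanishing of determinants.
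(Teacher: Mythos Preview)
Your proposal is correct and follows essentially the same approach as the paper: apply the rescaling of Section~\ref{sec:rescaled-thermostat} (which in the variable-mass case turns $\Omega_T(s)$ into $R_T(\sigma)=\Omega_T(\sigma/\sqrt{T})$ and $V$ into $\beta V$), pass to the $\beta=0$ limit using the hypothesis $R_T\to\Omega$ in $C^r$, and invoke Theorem~\ref{thm:kam-tori-for-nose-like-thermostats-free-particle}. One small terminological slip: the change of variables in \eqref{eq:rescaling} is genuinely canonical (not merely conformally symplectic), with the factor $T$ appearing as an overall multiplicative constant on the transformed Hamiltonian plus an additive constant, exactly as you describe in the rest of that paragraph.
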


These two theorems are proven in a similar manner to their
constant-mass counterparts. Of course, the constant-mass theorems are
special cases, but because the variable-mass case is more involved, we
have elected to present proofs of each. The proofs appear in section
\ref{sec:high-temp-limit}.

\section{Terminology and Notation}
\label{sec:term-not}

Generating functions are a classical and convenient way to create
canonical transformations. To explain, let $(q',p') = f(q,p)$ be a
canonical transformation, so that $q' · dp' + p · dq = d φ$ is closed
and therefore locally exact. That is, there is a locally-defined
function $φ = φ(p';q)$ of the mixed coordinates $(p';q)$ such that $q'
= ∂ φ/∂ p'$ and $p = ∂ φ/∂ q$. The transformation $f$ is implicitly
determined by $φ$. The identity transformation has the generating
function $φ = q · p'$.

In the sequel, a canonical system of coordinates $(x,X) = (x_1 , …,
x_n, X_1, …, X_n)$ are denoted using the capitalization convention:
the Liouville $1$-form equals $\sum_{i=1}^n X_i \D x_i$ and $X_i$ is
the momentum conjugate to the coordinate $x_i$.

In practice, construction of action-angle coordinates for a particular
Hamiltonian is a very difficult problem. However, \textem{approximate}
action-angle coordinates may be constructed by methods similar to
their construction in the Birkhoff normal form: by means of a sequence
of generating functions that transform the Hamiltonian into a
near-integrable form. In this case, one verifies non-degeneracy for
the integrable approximation.

\section{The Rescaled Thermostat}
\label{sec:rescaled-thermostat}

The parameters $n, k, T$ that enter the thermostat may have physical
significance, but for the purposes here, it suffices to amalgamate
$nkT$ into a single parameter, which is denoted by $T$. Let us rescale
the variables in the Nos{é} thermostat
\begin{align}
\label{eq:rescaling}
q  &=  \sqrt{M}\, w \bmod \Z^n, &&& p  &=  W / \sqrt{M}, &&& s  &=  σ/\sqrt{MT}, &&& p_s  &=  \sqrt{MT}\, Σ.
\end{align}
With this canonical change of variables, the thermostated Hamiltonian
for $H$ \eqref{eq:h} is, where $β = 1/T$,
\begin{equation}
  \label{eq:rescaled-f}
  F = T \times \underbrace{\left[ ½ ⎸ W/σ ⎹  ² + ½ Σ ² + β V(w √ M) + \ln σ \right]}_{F_{β}} - ½ T \ln(MT).
\end{equation}
The coordinates $(w,σ)$ and $(W,Σ)$ are canonically conjugate, so up
to a constant rescaling of time, the Hamiltonian flow of $F$ equals
that of $F_{β}$. Additionally, when $M=1$, $F_{β}$ is the
Nos{é}-thermostated Hamiltonian for the Hamiltonian $H_{β}$
\eqref{eq:h-eps} of Theorem \ref{thm:kam-tori-free-particle} with
$ε=β$.

Because the thermostat mass, $M$, is fixed and enters into the rescaled
thermostated Hamiltonian $F_{β}$ only through the bounded potential
$V$, the convention is adopted that
\begin{equation}
  \label{eq:eps=1}
  M = 1.
\end{equation}
The analysis below is altered in insignificant ways by this additional
hypothesis.

\section{The Nos{é} Thermostat}
\label{sec:kam}

In the $1$-degree of freedom case, the following lemma illuminates the
nature of the Hamiltonian $F_{β}$ \eqref{eq:rescaled-f}.

\begin{lemma}[\cite{arxiv-pp1}]
  \label{rot-inv}
  Let $n=1$ and $β = 0$. Under the canonical change of coordinates induced by introducing cartesian coordinates,
  $(a,b) = ( σ \cos w, σ \sin w ),$
  the rescaled thermostated Hamiltonian equals
  \begin{equation}
    \label{eq:f0}
    F_0 = ½ \left[ A ² + B ² \right] + ½ \ln \left( a ² + b ² \right).
  \end{equation}
  That is, $F_0$ is a mechanical hamiltonian with a rotationally
  invariant logarithmic potential.
\end{lemma}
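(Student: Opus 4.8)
The plan is to verify \eqref{eq:f0} by a direct substitution, once the Cartesian chart is recognized as the canonical lift of a point transformation on the configuration cylinder. Setting $n=1$, $\beta=0$, and invoking the normalization $M=1$ of \eqref{eq:eps=1} in \eqref{eq:rescaled-f}, the rescaled thermostated Hamiltonian collapses to
\begin{equation*}
  F_0 \;=\; \frac12\,\frac{W^2}{\sigma^2} \;+\; \frac12\,\Sigma^2 \;+\; \ln\sigma ,
\end{equation*}
where, following the capitalization convention of Section \ref{sec:term-not}, $(w,\sigma)$ are the configuration variables on the cylinder $\{\sigma>0\}$ (with $w$ the angular variable) and $(W,\Sigma)$ are the conjugate momenta, the Liouville $1$-form being $W\,\D w + \Sigma\,\D\sigma$.

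First I would identify $(w,\sigma)\mapsto(a,b)=(\sigma\cos w,\sigma\sin w)$ as the polar-to-Cartesian diffeomorphism of the cylinder $\{\sigma>0\}$ onto $\R^2\setminus\{0\}$, in which $\sigma$ plays the role of the radial coordinate. Its canonical lift is generated --- in the mixed coordinates $\varphi=\varphi(A,B;w,\sigma)$ of Section \ref{sec:term-not} --- by
\begin{equation*}
  \varphi(A,B;w,\sigma) \;=\; \sigma\,\bigl( A\cos w + B\sin w \bigr),
\end{equation*}
since $a=\partial\varphi/\partial A$ and $b=\partial\varphi/\partial B$ reproduce the point transformation, while $W=\partial\varphi/\partial w$ and $\Sigma=\partial\varphi/\partial\sigma$ yield
\begin{equation*}
  \frac{W}{\sigma} \;=\; -A\sin w + B\cos w, \qquad \Sigma \;=\; A\cos w + B\sin w .
\end{equation*}

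The crux is then the observation that the pair $(W/\sigma,\Sigma)$ is obtained from $(A,B)$ by an orthogonal $2\times 2$ matrix depending on $w$, hence $W^2/\sigma^2+\Sigma^2 = A^2+B^2$. Feeding this, together with $\ln\sigma=\frac12\ln\sigma^2=\frac12\ln(a^2+b^2)$, into the displayed expression for $F_0$ gives exactly \eqref{eq:f0}, which is visibly a mechanical Hamiltonian $\frac12(A^2+B^2)+U(a,b)$ with the rotationally invariant potential $U(a,b)=\frac12\ln(a^2+b^2)$. I do not expect a genuine obstacle here: the only care required is to keep straight which of $(w,\sigma)$, $(W,\Sigma)$ are positions and which are momenta, so the lift is taken in the correct direction, and to note the standard abuse by which $w$ is read as an angle of period $2\pi$ (equivalently, one computes on the universal cover) so that the Cartesian chart is smooth away from the excluded set $\sigma=0$.
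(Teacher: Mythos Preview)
Your argument is correct. The paper does not actually supply a proof of this lemma: it is quoted from \cite{arxiv-pp1} and stated without justification, so there is nothing to compare against beyond confirming that your direct verification via the generating function $\varphi(A,B;w,\sigma)=\sigma(A\cos w+B\sin w)$ goes through, which it does.
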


In the $n$-degree of freedom case, the Hamiltonian $F_{β}$ is the
average of $n$ such Hamiltonians, under the constraint that the
radial distance from the origin ($σ$) and radial momentum ($Σ$)
coincide for each.

The Hamiltonian $F_0$ is completely integrable. In particular, there
is a family of normally elliptic invariant tori of $F_0$ along the
variety $\set{ σ=⎸ W ⎹ ≠ 0, Σ=0} $, parameterized by the angular
momentum integral $μ = W$. To deduce the existence of a
positive-measure set of invariant KAM tori, one would like to apply a
theorem of R{ü}ssmann and Sevryuk \cite{MR1354540,MR1390625}. The
potential functions $U(σ) = σ^{α}/α$, and the degeneration $U=\ln$,
frustrate this.

Instead of this straightforward route, we are obliged to compute
action-angle variables for $F_0$ and verify non-degeneracy
directly. The computation of action-angle variables is a difficult
problem for most integrable systems, due to the problems involved in
computing the necessary integrals (quadratures). However, in order to
apply KAM theory, it suffices to compute approximate action-angle
variables, or rather action-angle variables for an approximation to
the given integrable Hamiltonian--this is most commonly done in a
neighbourhood of an elliptic critical point via the Birkhoff normal
form. We pursue a similar strategy in a neighbourhood of the
above-mentioned normally elliptic invariant tori. As in the case of
the Birkhoff normal form, one postulates the form of the Hamiltonian
in action-angle variables up to a given order and attempts to solve
for the generating function of the canonical change of variables. This
is the strategy of the following lemma.

\subsection{Constant Temperature Thermostat}
\label{sec:const-temp-thermostat}

In order to state the following lemma, let us recall that a flat
Riemannian metric on the $n$-torus induces an inner product on $\R^n$
which will be denoted by $\ip{}{}$. By means of this inner product,
$\R^n$ and its dual vector space are identified. The identification
will be denoted by $v → v' = \ip v {·} $. The dual inner product on
the dual of $\R^n$ will be denoted by $\ip{}{}$, also, so the inverse
of the map $v → v'$ is $v'' → v$.

The point $P ∈ \R^+ × \T^n × \R × \R^n \equiv \cotangent
\left( \R^+ ×  \T^n \right)$ is denoted by $P=(σ,w,Σ,W)$, which gives
a coordinate system. Similarly, a point $Q ∈ \cotangent \left( \T^1 ×
  \T^n \right)$ is denoted by $Q=(θ,η,I,J)$. The zero section of
$\cotangent X$ is denoted by $Z(X)$.

\begin{lemma}
  \label{mu-is-1}
  Let $C$ be a unit co-vector and let $λ ⊂ \cotangent (\R^+ × \T^n)$
  be the isotropic graph of $w → (1,w,0,C)$. There are neighborhoods
  ${\mathcal O} ⊃ λ$ and ${\mathcal P} ⊃ Z(\T^1 × \T^n)$ and a
  canonical transformation
  $Φ : {\mathcal P} → {\mathcal O} - λ$,
  $(σ, w, Σ, W)= Φ(θ,η,I,J)$, that transforms the Hamiltonian $F_0$
  \eqref{eq:rescaled-f} to the sum $G_0+G_1$ where
  \begin{align}
    \label{eq:bnf-g0}
    G_0 &=
    I \left( 1 - \frac{11}{24} I + \ip C J + \frac{3}{2}  \ip C J ² - \frac{1}{2}⎸ J ⎹ ² \right) + O(5)\\
    \label{eq:bnf-g1}
    G_1 &=
    - \ip C J - \ip{C}{J} ² + \frac{1}{2} ⎸ J ⎹ ² + (⎸ J ⎹ ² - \frac{4}{3} \ip{C}{J} ²) \ip{C}{J} \\\notag
    &\phantom{=\ }- \frac{1}{4} ⎸ J ⎹ ⁴ + 2 \ip{C}{J} ²  ⎸ J ⎹ ² - 2 \ip{C}{J} ⁴ +
    O(5)
  \end{align}
  where $I$ has degree $2$, $J$ has degree $1$ and $O(5)$ is a
  remainder term containing terms of degree $≥ 5$.
\end{lemma}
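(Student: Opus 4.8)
The strategy is the approximate action–angle construction sketched in the paragraph preceding the lemma: postulate a generating function $\varphi$ of mixed type that puts $F_0$ into the desired form through terms of degree $4$, and solve order-by-order. Because $F_0$ (in the $n=1$ model of Lemma~\ref{rot-inv}, and its $n$-fold "averaged" analogue) is a mechanical Hamiltonian with a rotationally invariant logarithmic potential, the normally elliptic torus $\lambda=\{\sigma=|W|=1,\ \Sigma=0,\ W=C\}$ is, after introducing polar-type coordinates adapted to $C$, a relative equilibrium; the linearized transverse dynamics is a harmonic oscillator whose frequency we normalize to $1$ (this is the source of the leading $I$ in $G_0$). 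So the first step is the change of coordinates centering phase space on $\lambda$: write $\sigma = 1 + (\text{small})$, split $W$ into its $C$-component and the orthogonal complement, and introduce the angle $\theta$ conjugate to the transverse oscillator action $I$ together with the angle $\eta$ on $\T^n$ whose conjugate momentum $J$ measures the deviation $W-C$. One must keep $\sigma$ away from $0$ (hence the punctured neighborhood ${\mathcal O}-\lambda$ and the logarithm's singularity is harmless) and record that $\lambda$ is isotropic so that a generating function exists.

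Next, expand $F_0$ in these coordinates as a Taylor series in the small quantities, grading $I$ with degree $2$, $J$ with degree $1$ (so $\sqrt I\sim$ degree $1$, matching the oscillator scaling), and collect terms through degree $4$, tracking the $O(5)$ tail. The logarithmic potential $\tfrac12\ln(\sigma^2)$ contributes the constants $-\tfrac{11}{24}$ etc. after one expands $\sigma$ around $1$; the kinetic term $\tfrac12|W/\sigma|^2$ contributes the $\ip C J$, $\ip C J^2$, $|J|^2$ cross terms. At this stage the Hamiltonian has $\theta$-dependence at degrees $3$ and $4$; the homological step is to choose the generating function $\varphi(\theta,\eta; I_{\text{new}}, J_{\text{new}}) = \theta I + \eta\cdot J + \varphi_3 + \varphi_4 + \cdots$, where $\varphi_k$ is a trigonometric polynomial in $\theta$ of degree matching, and solve $\{\varphi_k, (\text{oscillator part})\} + (\text{known term of degree }k) = (\theta\text{-independent part})$ for $\varphi_k$. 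Since the unperturbed frequency in $\theta$ is $1\neq 0$ and there is no resonance to worry about (the $\eta$-directions carry zero frequency at leading order, and $\varphi_k$ need not depend on $\eta$), each homological equation is solved simply by Fourier-averaging over $\theta$; no small divisors appear because we only go to finite order. The $\theta$-averaged remainder is $G_0$; what is genuinely the "potential-like" leftover, i.e. the part that is not a function of the actions alone — the terms linear or quartic in $J$ with the given coefficients — is collected into $G_1$. (One should check that $G_1$ is exactly the obstruction that cannot be removed by $\varphi$ at this order, which is why it is displayed separately; it will be shown in the subsequent non-degeneracy computation to be negligible for the Hessian in the actions.)

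The main obstacle is purely computational bookkeeping: organizing the degree-$3$ and degree-$4$ parts of $\tfrac12|W/\sigma|^2 + \tfrac12\ln(\sigma^2)$ (together with the $\tfrac12\Sigma^2$ that becomes part of the oscillator) after the polar and centering changes of variables, and correctly propagating the effect of $\varphi_3$ into degree $4$ (the second-order term $\tfrac12\{\varphi_3,\{\varphi_3,\cdot\}\}$ and the cross term $\{\varphi_3, H_3\}$ both feed degree $4$), so that the coefficients $-\tfrac{11}{24}$, $\tfrac32$, $-\tfrac12$ in $G_0$ and the full list in $G_1$ come out exactly as stated. This is a finite, mechanical computation — best done with a symbolic algebra system, consistent with the paper's evident use of Maxima — but it is the step where an error would be easy to make, so it must be carried out carefully and, ideally, cross-checked against the $n=1$ rotationally invariant reduction of Lemma~\ref{rot-inv}, whose action-angle variables can be computed independently as a sanity check on the coefficients.
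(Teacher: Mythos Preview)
Your overall plan—a Birkhoff-style normal form via generating functions near the normally elliptic torus $\lambda$—is the right framework and matches the paper in spirit. However, the paper's proof exploits a structural shortcut you miss, and you mischaracterize $G_1$.

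The paper's first move is an explicit generating function $\varphi(\Sigma,W;u,v)=(1-u)\,|W|\,\Sigma+\ip{C-W}{v}$, inducing the transformation $(\sigma,w,\Sigma,W)\to(u,v,U,V)$ with $\sigma=(1-u)\,|C-V|$ and $W=C-V$. In these coordinates $F_0$ \emph{splits exactly} as $G_0(u,U,V)+G_1(V)$ with $G_1=\ln|C-V|$. Because $G_1$ depends only on $V$, and the second normalizing transformation (with generating function $\nu=\ip{y}{V}+U\,[\cdots]$) leaves $V=Y=J$ fixed, $G_1$ passes through untouched: its expansion in the lemma is simply the degree-$4$ Maclaurin polynomial of $\tfrac12\ln(1-t)$ at $t=2\ip{C}{J}-|J|^2$. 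Only $G_0$ needs the homological/averaging step, and since $G_0$ is already independent of the torus angle $v$, the second generating function can be taken independent of $v$ as well. This cuts the computation substantially compared with your proposal of expanding everything together and averaging the full Hamiltonian.

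Your description of $G_1$ as ``the part that is not a function of the actions alone'' and ``negligible for the Hessian in the actions'' is wrong on both counts. $G_1$ is entirely a function of the action variable $J$; the decomposition $G_0+G_1$ is not ``normal form plus obstruction'' but an exact splitting induced by the first coordinate change. And $G_1$ is essential for non-degeneracy: in the proof of Theorem~\ref{thm:kam-tori-free-particle}, the lower-right block $1-2CC'$ of the Hessian \eqref{eq:hess-f0-i=0=j} comes from the quadratic part $\tfrac12|J|^2-\ip{C}{J}^2$ of $G_1$ (the $G_0$ contribution $2I\gamma$ vanishes at $I=0$), and without it the Hessian would be singular on $V^{\perp}$.
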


\begin{remark}
  A few points on the statement of Lemma \ref{mu-is-1}:
  \begin{enumerate}
  \item The variables $(θ,η,I,J)$ are angle-action variables, to
    leading order, for the Hamiltonian $F_0$.
  \item It may seem odd that $I$ and $J$ do not both have degree $1$
    in the expansions. The reason is revealed in the proof: the
    expressions in eq.s \ref{eq:bnf-g0}, \ref{eq:bnf-g1} originate
    from degree $4$ Taylor polynomials in coordinates on
    ${\mathcal O}$ and in these coordinates, $I$ is a quadratic
    function and $J$ is linear. This reflects the fact that the
    regular Liouville tori in ${\mathcal O} - λ$ limit onto the
    isotropic torus $λ$.
  \item The remainder term $O(5)$ is the degree $≥ 5$ remainder term
    in the above-mentioned Taylor polynomial.
  \end{enumerate}
\end{remark}

\begin{proof}
  The generating function $φ(Σ,W;u,v) = (1-u) ⎸ W ⎹  Σ + \ip{C-W}{v}$ induces the
  canonical transformation $(σ,w,Σ,W) = f(u,v,U,V)$ where
  \begin{align}
    \label{eq:fgen}
    σ &= (1-u) ⎸ C-V ⎹, &&& w &= -v - U(1-u)/⎸ C-V ⎹  \bmod \Z^n, \\\notag
    Σ &= U/⎸ C-V ⎹,    &&& W &= C-V.
  \end{align}
  This transforms the Hamiltonian $F_0$ in variables $(σ,w,Σ,W)$ to
  \begin{equation}
    \label{eq:f0-after-f}
    F_0 = \underbrace{½ (1-u)^{-2} + ½ ⎸ C-V ⎹ ^{-2} U ² + \ln(1-u)}_{G_0} + \underbrace{\ln ⎸ C-V ⎹ }_{G_1}.
  \end{equation}
  The symplectic map $f$ is singular along the set $\set{V=C}$ (which
  should be mapped to the zero section of $\cotangent \T^n$,
  $\set{(w,W) \mid W=0}$), and it transforms $\set{u=0, U=0}$ to the
  variety $\set{σ=⎸ W ⎹ ≠ 0, Σ=0}$ which consists of quasi-periodic
  and periodic invariant tori.

  The determination of a further coordinate change is independent of
  the final term in $F_0$, which involves only $V$, so we let $G_0 =
  F_0 - \ln ⎸ C-V ⎹ $ as indicated in \eqref{eq:f0-after-f}. With the
  fourth-order Maclaurin expansion of $G_0$, one obtains
  \begin{equation}
    \label{eq:f0-almost-bnf}
     G_0 = \left(2 \ip C V ² - ½  ⎸ V ⎹ ²  + \ip C V + ½\right) U^2 + \left(\frac{9u^2}{4} + \frac{5u}{3}  + 1\right) u^2  + O(5),
  \end{equation}
  where $O(5)$ is the remainder term that contains terms of degree $5$ and higher.

  Because $G_0$ is independent of $v$, one postulates a second
  symplectic transformation $(x,y,X,Y) → (u,v=y,U,V)$ with the
  generating function
  \begin{equation}
    \label{eq:nu-gen-fun}
    ν = ν(U,V;x,y) = xU + \ip y V + \sum_{i,k,l} ν_{ikl} x^i U^k V^l + O(5),
  \end{equation}
  where $l$ is a multi-index, $V^l = \prod_{i=1}^n V_i^{l_i}$, the degree of the terms in the sum are $3$ or $4$ and $O(5)$ is the remainder of terms of degree at least $5$.
  In addition, a transformed Hamiltonian is postulated
  \begin{equation}
    \label{eq:g0-nf}
    G_0 = \left(x^2 + \frac{X^2}{2} \right)\left(α\left(x^2 + \frac{X^2}{2} \right) + \ip{γY}{Y} + \ip{β}{Y} + 1\right) + O(5),
  \end{equation}
  where $α$ is a scalar, $β$ is a co-vector and $γ$ is a symmetric linear map.
  One solves for the generating function $ν$ and $G_0$ simultaneously, and arrives at
  \begin{align} \notag
    {ν} &= \ip y V \\\notag
    &+U \left[
      x \left( 1 + \frac{1}{2} \ip{C}{V} - \frac{1}{4} ⎸ V ⎹ ² + \frac{5}{8} \ip{C}{V} ² \right)
      - \frac{5}{6} x ² \left( 1 + \ip C V \right)
      + \frac{55}{144} x ³
    \right] \\     \label{eq:nu-c1}
    &+ U ³ \left[ -\frac{5}{18} + \frac{233}{288} x - \frac{5}{9} \ip C V \right]
    + O(5),
  \end{align}
  and
  \begin{align}
    \label{eq:g0-c1-params}
    {α} &= -\frac{11}{24}, &&&
    {β} &= C, &&&
    {γ} &= \frac{1}{2} \left( 3 CC' - 1 \right).
  \end{align}
  Note that when $n=1$ and $C=1$, the above results simplify to those found in \cite{arxiv-pp1}.

  To complete the proof, let $I = (x ² + X ²/2)$, $θ$ be the conjugate
  angle ($\bmod 2 π$), and $η = y \bmod \Z^n$, $J = Y$. The
  composition of the sequence of above-defined canonical coordinate
  changes defines the canonical coordinate change $Φ^{-1} : (σ,w,Σ,W) →
  (θ,η,I,J)$. The transformed Hamiltonian $G_0$ \eqref{eq:g0-nf} is
  congruent $\bmod\ O(5)$ to that in \eqref[1]{eq:bnf-g0}.

  The form of the function $G_1$ \eqref{eq:bnf-g1} in these coordinates
  is determined by the Maclaurin expansion of $\frac{1}{2} \ln(1-t)$
  to fourth order, combined with the substitution $t = 2 \ip C J - ⎸ J
  ⎹ ²$. This yields the stated expansion of $G_1$ to $O(5)$.
\end{proof}

\begin{proof}[Proof of Theorem \ref{thm:kam-tori-free-particle}]
  The thermostated Hamiltonian $F_{ε} = F_0 + ε V(w) = F_0 +
  O(ε)$ where $O(ε) = ε V(w)$ is $C^r$ and $\Z^n$-periodic in
  $w$. Under the sequence of canonical transformations in lemma
  \ref{mu-is-1}, $w = -η + ρ(θ,η,I,J) + O(5) \bmod \Z^n$ where $ρ$ is
  a real-analytic map, and $O(5)$ is a remainder in $I,J$. So the
  perturbation in the approximate angle-action variables $(θ,η,I,J)$
  is $C^r$ and $O(ε)$.

  The Hessian of $F_0$ in the action variables $(I,J)$ is
  \begin{align}
    \label{eq:hess-f0-i=0=j}
    A &=
    \begin{bmatrix}
      -\frac{11}{12} & C' \\[1mm] C & 1-2CC'
    \end{bmatrix}
    &&& \textrm{when } I=0,J=0.
  \end{align}
  When restricted to the invariant subspace $V$ spanned by $(1,0)$ and
  $(0,C)$, $A$ is non-singular; and the restriction of $A$ to the
  invariant subspace $V^{\perp}$ is also non-singular. Thus, $F_0$ is
  Kolmogorov non-degenerate in a neighbourhood of the invariant
  isotropic torus $λ$. Since $λ$ is determined by the arbitrary unit
  co-vector $C$, $F_0$ is Kolmogorov non-degenerate in a neighbourhood
  of $\set 1 × \T^n × \set 0 × S^{n-1} ⊂ \cotangent \left( \R^+ × \T^n
  \right)$.

  The proof of iso-energetic non-degeneracy follows from Kolmogorov
  non-degeneracy. When $I=0, J=0$, the derivative of $F_0=G_0+G_1$ is
  $dI-\ip{C}{dJ} \equiv (1,-C)$. Therefore, the determinant of the
  bordered Hessian \eqref{eq:kolm-iso-non-deg} is $-1/12$.
\end{proof}

\subsection{The High-Temperature Limit}
\label{sec:high-temp-limit}

\begin{proof}[Proof of Theorem \ref{thm:kam-tori-high-temperature-limit}]
  The rescaled thermostated Hamiltonian $F_{β} = F_0 + β V(w) = F_0 +
  O(β)$ where $O(β) = β V(w)$ is $C^r$ and $\Z^n$-periodic in $w$
  \eqref{eq:rescaled-f}. Therefore, we can apply
  Theorem~\ref{thm:kam-tori-free-particle} to deduce the result.
\end{proof}

\section{Nos{é}-like Thermostats}
\label{sec:nose-like-thermostats}

This section proves theorems
\ref{thm:kam-tori-for-nose-like-thermostats-free-particle} and
\ref{thm:kam-tori-for-nose-like-thermostats}.

\subsection{Constant Temperature Thermostats}
\label{sec:constant-temp-nose-like}

Assume that the temperature $T>0$ is fixed, so by the rescaling
\eqref{eq:rescaling} it can be assumed $T$ is unity. Without loss of
generality, it can be assumed that the rescaled inverse thermostat
mass $Ω$ maps $1$ to $1$. In this case, the Nos{é}-like variable-mass
thermostat for the Hamiltonian $H_{ε}$ \eqref{eq:h} is
\begin{equation}
  \label{eq:rescaled-f-nose-like-constant-temp}
  F_{ε} = ½ ⎸ W/σ ⎹ ² + ½ Ω(σ)\, Σ ² + ε V(w) + \ln σ.
\end{equation}

The set of thermostatic equilibria $Τ$ of a variable mass thermostat
coincides with the set of thermostatic equilibria for the constant
mass thermostat. Given this, the notation and terminology of lemma
\ref{mu-is-1} are used in the following lemma.

\begin{lemma}
  \label{lem:nose-like-normal-form}
  Let $C$ be a unit co-vector and let $λ ⊂ \cotangent (\R^+ × \T^n)$
  be the isotropic graph of $w → (1,w,0,C)$. Assume that $r>2n+2$, $Ω
  ∈ C^r(\R^+,\R^+)$ and $Ω(σ) = 1 + a(σ-1) + b(σ-1) ²/2 + O((σ-1)
  ³)$. If
  \begin{enumerate}
  \item \label{ii:beta}   $β = (1-a/2) C$;                                                                                            
  \item \label{ii:alpha}  $b = 16 α + 3 a ²/2 - 5 a + 22/3$; and     
  \item \label{ii:gamma}  $γ = (a-2)/4 + \left( 4 α + a²/2 - 2a + 10/3 \right) CC'$, 
  \end{enumerate}
  then there are neighborhoods ${\mathcal O} ⊃ λ$ and ${\mathcal P} ⊃
  Z(\T^1 × \T^n)$ and a canonical transformation $Φ : {\mathcal P} →
  {\mathcal O} - λ$, $(σ, w, Σ, W)= Φ(θ,η,I,J)$ that transforms the
  Hamiltonian $F_0$ \eqref{eq:rescaled-f-nose-like-constant-temp} into
  the sum $G_0+G_1$ where
  \begin{align}
    \label{eq:bnf-nl} 
    G_0 &= I(1 + α I + \ip{β}{J} + \ip{γ J}{J}) + O(5),
  \end{align}
  $G_1$ is given in \eqref[1]{eq:bnf-g0} and $I$ has degree $2$, $J$ has
  degree $1$ and $O(5)$ is a remainder term containing terms of degree
  $≥ 5$.
\end{lemma}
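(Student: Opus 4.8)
The plan is to run the proof of Lemma~\ref{mu-is-1} essentially verbatim, carrying the two Taylor coefficients $a:=\Omega'(1)$ and $b:=\Omega''(1)$ of $\Omega$ through as extra parameters. Since $\Omega(1)=1$ and $\abs{C}=1$, the variable-mass Hamiltonian \eqref{eq:rescaled-f-nose-like-constant-temp} (with $\varepsilon=0$) agrees with the constant-mass one to second order along $\lambda$, so the same first generating function $\varphi(\Sigma,W;u,v)=(1-u)\abs{W}\Sigma+\ip{C-W}{v}$ of \eqref{eq:fgen} applies and transforms $F_0$ into
\[
  F_0=\underbrace{\tfrac{1}{2}(1-u)^{-2}+\tfrac{1}{2}\Omega\big((1-u)\abs{C-V}\big)\abs{C-V}^{-2}U^2+\ln(1-u)}_{G_0}+\underbrace{\ln\abs{C-V}}_{G_1}.
\]
The only departure from \eqref{eq:f0-after-f} is the factor $\Omega((1-u)\abs{C-V})$ in the middle term; in particular $G_1=\ln\abs{C-V}$ is literally unchanged, and since (as in Lemma~\ref{mu-is-1}) the ensuing transformation satisfies $V=J$, the degree-$4$ expansion of $G_1$ is again the one recorded in \eqref{eq:bnf-g1}. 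The remainder terms are now merely $C^r$ rather than real-analytic, a regularity inherited from $\Omega\in C^r(\R^+,\R^+)$, which is why the hypothesis $r>2n+2$ is retained.

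Next I would Maclaurin-expand $G_0$ to degree $4$ in $(u,U,V)$, all three variables of degree $1$ as in the proof of Lemma~\ref{mu-is-1}. Writing $\sigma-1=(1-u)\abs{C-V}-1=-u-\ip{C}{V}+O(2)$ and substituting $\Omega(\sigma)=1+a(\sigma-1)+\tfrac{b}{2}(\sigma-1)^2+O((\sigma-1)^3)$, the factor $U^2$ of degree $2$ in the middle term pushes all higher Taylor coefficients of $\Omega$ into terms of degree $\geq5$, so only $a$ and $b$ survive in the degree-$\leq4$ truncation --- which is precisely why the hypothesis on $\Omega$ stops at second order. The degree-$2$ part of $G_0$ is still $u^2+\tfrac{1}{2}U^2$; the degree-$3$ part becomes $\tfrac{5}{3}u^3-\tfrac{a}{2}uU^2+(1-\tfrac{a}{2})\ip{C}{V}U^2$; and the degree-$4$ part acquires $a$- and $b$-dependent terms alongside those already present in \eqref{eq:f0-almost-bnf}.

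Then, as in Lemma~\ref{mu-is-1}, I would postulate the second generating function $\nu=\nu(U,V;x,y)=xU+\ip{y}{V}+\sum_{i,k,l}\nu_{ikl}x^iU^kV^l+O(5)$, with the degree-$3$ and degree-$4$ coefficients $\nu_{ikl}$ undetermined, together with the normal form $G_0=I(1+\alpha I+\ip{\beta}{J}+\ip{\gamma J}{J})+O(5)$, where $I=x^2+X^2/2$, $J=Y$, and $\alpha$, the co-vector $\beta$ and the symmetric map $\gamma$ are undetermined, and solve for $\nu$ and $(\alpha,\beta,\gamma)$ simultaneously by matching degrees. The degree-$2$ identity is automatic. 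At degree $3$ the homological equations are solvable once the $\theta$-independent part of the degree-$3$ terms is absorbed into the normal form; that resonant part equals $(1-a/2)\ip{C}{J}\,I$, which forces $\beta=(1-a/2)C$. At degree $4$ the remaining $\nu_{ikl}$ are then determined, and the residual resonant part is a combination of $I^2$, $\abs{J}^2I$ and $\ip{C}{J}^2I$ only; matching it against $\alpha I^2+\ip{\gamma J}{J}\,I$ forces $b=16\alpha+3a^2/2-5a+22/3$ and $\gamma=(a-2)/4+\big(4\alpha+a^2/2-2a+10/3\big)CC'$, with $\alpha$ a free parameter (equivalently, determined by $b$) --- these are exactly the three hypotheses of the lemma. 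Specializing to $\Omega\equiv1$, hence $a=b=0$, recovers $\alpha=-11/24$, $\beta=C$, $\gamma=\tfrac{1}{2}(3CC'-1)$ and thus Lemma~\ref{mu-is-1}, a useful consistency check. Finally, since $\nu$ generates a near-identity analytic canonical map, composing it with $f$ --- singular only along $\set{V=C}$, exactly as in Lemma~\ref{mu-is-1} --- produces $\Phi:\mathcal{P}\to\mathcal{O}-\lambda$ on suitable neighbourhoods $\mathcal{P}\supset Z(\T^1\times\T^n)$ and $\mathcal{O}\supset\lambda$; taking $\theta$ to be the angle conjugate to $I$ ($\bmod\,2\pi$), $\eta=y\bmod\Z^n$ and $J=Y$ then yields the asserted form $F_0=G_0+G_1$ with $G_0$ as in \eqref{eq:bnf-nl}.

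The one genuine difficulty is the degree-$4$ bookkeeping: propagating $a$ and $b$ through the normalization without error and verifying that the degree-$4$ resonant obstruction collapses to a linear combination of exactly the three monomials $I^2$, $\abs{J}^2I$, $\ip{C}{J}^2I$ --- equivalently, that the degree-$4$ system is neither over- nor under-determined. This holds because the single free scalar $b$ (equivalently $\alpha$) together with the two scalars parametrising $\gamma=\gamma_0+\gamma_1CC'$ match precisely those three monomials, while the single degree-$3$ resonant obstruction is matched by the co-vector $\beta$.
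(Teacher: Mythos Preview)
Your proposal is correct and follows essentially the same route as the paper: apply the generating function $\varphi$ of \eqref{eq:fgen}, note that $G_1=\ln\abs{C-V}$ is unchanged, expand $G_0$ to degree $4$ in $(u,U,V)$ carrying $a,b$ as parameters, then postulate the generating function $\nu$ of the form \eqref{eq:nu-gen-fun} together with the normal form \eqref{eq:g0-nf} and solve simultaneously for the coefficients of $\nu$ and for $\alpha,\beta,\gamma$. The paper simply records the resulting explicit $\nu$ (the variable-mass analogue of \eqref{eq:nu-c1}) and states that $\alpha,\beta,\gamma$ come out as in items \ref{ii:beta}--\ref{ii:gamma}; your added remarks on why only $a,b$ survive in the degree-$\le4$ truncation, on the degree-by-degree matching, and on the $a=b=0$ consistency check are welcome clarifications but do not change the argument.
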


\begin{remark}
  In the case $a=b=0$, one finds that $α, β$ and $γ$ are determined by
  \eqref[1]{eq:g0-c1-params}. If one sets $a=2$ and $α=-1/3$,
  then $β=0$, $γ = 0$ and $G_0 = I(1-I/3) + O(5)$.
\end{remark}

\begin{proof}[Proof of Lemma \ref{lem:nose-like-normal-form}]
  The proof is similar to that of Lemma \ref{mu-is-1}, so only the
  highlights are sketched.

  One utilizes the change of coordinates in \eqref[2]{eq:fgen}. The Maclaurin
  expansion of $G_0$ in the coordinates $(u,v,U,V)$ is
  (c.f. \eqref[1]{eq:f0-almost-bnf}):
  \begin{align}
    \label{eq:f0-almost-bnf-alt}
    G_0 &= U^2 \left[\frac{b}{4} u ² -  \frac{1}{2} \left( a + (b-a) \ip C V \right) u \right.\\\notag
    &\phantom{= U^2 }+ \frac{1}{4} \left( 8 + b-5a \right) \ip C V ² - \frac{1}{4} ( 2-a ) ⎸ V ⎹ ²
    + \left. \frac{1}{4} ( 2-a ) \ip C V + ½\right] \\\notag
    &+ u^2 \left[ \frac{9u^2}{4} + \frac{5u}{3}  + 1 \right]  + O(5).
  \end{align}
  One postulates a generating function $ν$ and form of $G_0$ as in
  equations \ref{eq:nu-gen-fun}--\ref{eq:g0-nf} and solves for $ν$ and
  the parameters $α, β$ and $γ$. One determines that
  \begin{align} \notag
    {ν} &= \ip y V \\\notag
    &+U \left[
      \left( 1 + \frac{1}{4} (2-a) \ip C V - \frac{1}{8} (2-a) ⎸ V ⎹ ² + \frac{1}{96} (192α + 9a^2 - 84a + 148) \ip{C}{V} ²\right)
      x
    \right.\\\notag
    &\phantom{+U [+}
    - \left.\left( \frac{5}{6} + \frac{5}{12} (2-a) \ip C V \right)
    x ²
    +\left( \frac{11}{18} + \frac{α}{2} \right)
    x ³ 
    \right] \\\notag
    &+ U ³ \left[
      \frac{1}{36} (3a-10) - \frac{1}{72} (96 α + 9 a^2 - 56 a + 84) \ip C V + \frac{1}{144} (100 - 36 α - 9 a^2) x \right] \\     \label{eq:nu-c1-alt}
    &+ O(5),
  \end{align}
  and $α, β$ and $γ$ are given in terms of $a$ and $b$ by \ref{ii:beta}--\ref{ii:gamma} above.
\end{proof}

\begin{proof}[Proof of Theorem \ref{thm:kam-tori-for-nose-like-thermostats-free-particle}]
  We verify iso-energetic and then Kolmogorov non-degeneracy.
  One computes the derivative, up to $O(4)$, to be
  \begin{align}
    \label{eq:dg0-g1}
    d G_0 &= \left( 1 + 2 α I + \ip{β}{J} + \ip{γ J}{J} \right)\,dI +
    I \ip{ \left( β + 2 γ J \right) }{dJ}
    \\\notag
    d G_1 &= \left(-\ip{J}{J}+4 \ip{C}{J}^2+2 \ip{C}{J}+1\right) \ip{J}{dJ}+\\\notag
    &\phantom{=}\left(4 \ip{C}{J} \ip{J}{J}+\ip{J}{J}-8 \ip{C}{J}^3-4
      \ip{C}{J}^2-2 \ip{C}{J}-1\right) \ip{C}{dJ}
  \end{align}
  and the Hessian of $G_0$ and $G_1$, up to $O(3)$, to be
  \begin{align}
    \label{eq:hess-g0-alt}
    d^2 G_0 &=
    \begin{bmatrix}
      2 α & (β + 2 γ J)' \\ (β + 2 γ J) & 2 I γ
    \end{bmatrix},
    &&& \textrm{ and }
    d^2 G_1 &=
    \begin{bmatrix}
      0 & 0\\0 & Q
    \end{bmatrix}
  \end{align}
  where
  \begin{align}
    \label{eq:hess-g1-alt}
    Q &= \left( 1 - ⎸ J ⎹ ² + 2 \ip C J + 4 \ip{C}{J} ² \right) 1 - 2JJ' \\\notag
    &+\left( 4 ⎸ J ⎹ ² - 8 \ip{C}{J} - 24 \ip{C}{J} ² -2 \right) CC' + \left( 2 + 8 \ip C J \right)\left( CJ' + JC' \right).
  \end{align}
  
  Assume that there exists parameters $a$ and $b$ and a unit co-vector
  $C$ such that $F_0$ does not satisfy the iso-energetic
  non-degeneracy condition in a neighbourhood of $I=0$ and $J=0$.

  Let the bordered Hessian \eqref{eq:kolm-iso-non-deg} of $F_0$,
  $\bmod\, O(3)$, be denoted by $B$. Let $I=0$ and $J=ρ C$ for a real
  scalar $ρ$. Let $W$ be the subspace with orthonormal basis
  $(1,0,0)$, $(0,C,0)$ and $(0,0,1)$. $W$ is invariant $\bmod\,
  O(ρ^3)$ by $B$. A calculation yields
  \begin{align}
    \label{eq:B}
    B | W &=
    \begin{bmatrix}
      2 α             & 2 γ ρ + β           & γ ρ^2 + β ρ + 1\\
      2 γ ρ + β       &   - 3 ρ^2 - 2 ρ - 1 &  - ρ^2 - ρ - 1\\
      γ ρ^2 + β ρ + 1 &  - ρ^2 - ρ - 1      & 0
    \end{bmatrix} + O(ρ^3)
    \\\notag
    \det(B | W) &=
    1 - 2 α - 2 β +
    \left( - 4 γ - 2 β^2 - 4 α + 2\right) ρ \\    \label{eq:detB}
    &+\left( - 6 β γ - 2 γ - β^2 + 2 β - 6 α + 3\right) ρ^2 + O(ρ^3)
  \end{align}
  where we have abused notation and let the scalar $β$ (resp. $γ$)
  denote the inner product of the vector $β$ with $C$ (resp. matrix
  $γ$ with $CC'$). The determinant $\det(B|W)=O(ρ^3)$ with real $α, β$
  and $γ$ iff $α=1/2, β=0$ and $γ=0$. If $β=0$, then lemma
  \ref{lem:nose-like-normal-form} implies that $a=2$. The same lemma
  implies that if $a=2$ and $α=1/2$, then $γ ≠ 0$. Therefore,
  $\det(B|W)$ has, at worst, a quadratic zero at $ρ=0$ (a more
  detailed calculation shows the vanishing is at worst linear).

  On the other hand, a calculation shows that $B|W^{\perp} =
  1+O(ρ)$. Therefore, $B=B(ρ)$ is non-degenerate in a deleted
  neighbourhood of $ρ=0$. Since $F_0$ is $C^r$ the iso-energetic
  non-degeneracy condition holds in an open set that contains the
  torus $λ_C$ in its closure. Since $C$ is arbitrary, there is an open
  set on which the iso-energetic non-degeneracy condition holds and
  this set contains the set of thermostatic equilibria in its
  closure.

  Let us now assume that there exist parameters $a, b$ and a unit
  co-vector $C$ such that $F_0$ fails to be Kolmogorov non-degenerate
  on a neighbourhood of $I=0, J=0$.

  Let $A$ be the Hessian of $F_0$, $\bmod\,O(3)$, and let $V$ be the
  subspace spanned by $(1,0)$ and $(0,C)$. When $J = ρ C$ for some
  real scalar $ρ$, then the subspace $V$ is $A$-invariant
  $\bmod\,O(3)$. In this case, $A|V$ is the upper left $2 × 2$ corner
  of $B|W$ \eqref{eq:B}, so
  \begin{equation}
    \label{eq:detA}
    \det \left( A|V \right) = -2α-β^2+\left(-4βγ-4α\right)ρ+\left(-4γ^2-6α\right)ρ^2+O(ρ^3).
  \end{equation}
  On the other hand, $A|V^{\perp}=1+O(ρ)$, so if $A$ is singular
  in a neighbourhood of $I=0, ρ=0$, then $\det \left( A|V
  \right)=O(ρ^3)$ so $α=-β^2/2$ and $γ=-β/2$ and $γ^2=-3 α/2$ (where
  we use the same abuse of notation as we did above). The only
  solution is $α=β=γ=0$.

  When $α, β$ and $γ$ are determined by Lemma
  \ref{lem:nose-like-normal-form}, then $α=β=0$ implies that $a=2$ and
  $γ ≠ 0$. Therefore, $A$ is non-degenerate in some deleted
  neighbourhood of $ρ=0$. This proves that, for any choice of $a$, $b$
  and unit co-vector $C$, any neighbourhood of the isotropic torus
  $λ=λ_C ⊂ \cotangent \left( \R^+ × \T^n \right)$ contains points
  where $F_0$ is Kolmogorov non-degenerate.
\end{proof}

\begin{remark}
  \label{rmk:both-degen}
  It follows from the above proof that if $F_0$ is both Kolmogorov and
  iso-energetic degenerate at $I=0,J=0$, then $a=0$, $b=-8$ and so
  $α=-1/2$, $β=C$ and $γ=-1/2+4/3 CC'$. Moreover, $F_0$ is degenerate
  in both senses along the entire set of thermostatic equilibria. On
  the other hand, if $Ω(σ) ≠ -4(σ-1)^2 + O((σ-1)^3)$, then $F_0$ is
  non-degenerate in one of the two senses along the set of
  thermostatic equilibria.
\end{remark}

\subsection{The high-temperature limit}
\label{sec:high-temp-limit-nose-like}

Let $Ω_T$ be the unscaled inverse thermostat mass as assumed in the
statement of Theorem~\ref{thm:kam-tori-for-nose-like-thermostats}. By
means of the rescaling \eqref{eq:rescaling} with $M=1$, the
thermostated Hamiltonian is transformed to
\begin{equation}
  \label{eq:rescaled-f-nose-like}
  F = T \times \underbrace{\left[ ½  ⎸ W/σ ⎹  ² + ½ R_{T}(σ)\, Σ ² + β V(w) + \ln σ \right]}_{F_{β}} - ½ T \ln(T),
\end{equation}
where $R_T(σ) = Ω_T(σ/\sqrt{T})$. By the hypotheses of Theorem
\ref{thm:kam-tori-for-nose-like-thermostats}, $Ω = \lim_{T → ∞}
R_{T,1}$ exists in $C^r(\R^+,\R^+)$. The proof of
Theorem~\ref{thm:kam-tori-for-nose-like-thermostats} now follows from
Theorem~\ref{thm:kam-tori-for-nose-like-thermostats-free-particle}.

\section{Conclusion}
\label{sec:conclusion}

This paper has shown that $n$-degree of freedom Nos{é}-like
thermostats ``suffer'' from persistence of invariant tori near
suitable completely integrable limits, extending the results of
\cite{arxiv-pp1} which deals with the $n=1$ case. A central role is
played here by the flat (or free) limit and hence the assumption that
the configuration space is an $n$-torus. It remains unclear if these
results extend to other configuration spaces, such as the $n$-sphere
with a round metric; possibly not. More likely is that the results do
extend to $n$-dimensional ellipsoids with distinct axes, or more
generally, Liouville metrics on spheres or products of spheres.

Of equal interest is to study how (topological) entropy or Arnol$'$d
diffusion is generated in these thermostats near the infinite
temperature limit.

\begin{arma}
\section{Compliance with Ethical Standards}
\label{sec:compliance}
Conflict of Interest: The author declares that he has no conflict
of interest.
\end{arma}

\bibliographystyle{amsplain}
\bibliography{nhmd}
\end{document}